\theoremstyle{plain}
 \newtheorem{thm}{Theorem}
 \newtheorem{lem}[thm]{Lemma}
\theoremstyle{definition}
\theoremstyle{remark}
\newcommand{\NaturalNumber}{\mathbb N}
\newcommand{\RealNumber}{\mathbb R}
\begin{document}
\title[The Brouwer fixed point theorem]
{An easily verifiable proof \\of the Brouwer fixed point theorem}
\author[Y. Takeuchi]{Yukio Takeuchi}
\address[Y. Takeuchi]{Takahashi Institute for Nonlinear Analysis,
 1-11-11, Nakazato, Minami, Yokohama 232-0063, Japan}
\email{aho31415@yahoo.co.jp}
\author[T. Suzuki]{Tomonari Suzuki}
\address[T. Suzuki]{Department of Basic Sciences,
Kyu\-shu Institute of Technology,
To\-bata, Kita\-kyu\-shu 804-8550, Japan}
\email{suzuki-t@mns.kyutech.ac.jp}
\date{}
\keywords{the Brouwer fixed point theorem}
\subjclass[2000]{Primary 55M20, Secondary 54C05}


\begin{abstract}
We give a remarkably elementary proof of the Brouwer fixed point theorem.
The proof is verifiable for most of the mathematicians.
\end{abstract}
\maketitle


\section{Introduction}
\label{SC:introduction}

The following theorem is referred to as the
 {\it Brouwer fixed point theorem}.

\begin{thm}[Brouwer \cite{REF:Brouwer1912_MathAnn}, Hadamard \cite{Hadamard}]
\label{THM:Brouwer}
Let $n \in \NaturalNumber$ and let $g$ be a continuous mapping on $[0,1]^n$.
Then there exists $z \in [0,1]^n$ such that $g(z)=z$.
\end{thm}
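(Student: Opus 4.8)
The plan is to avoid all machinery from algebraic topology and instead extract the fixed point from a purely combinatorial statement about labelings of a fine subdivision of the cube, passing to the limit by compactness. First I would reduce the theorem to the Poincar\'e--Miranda form: writing $g=(g_1,\dots,g_n)$ and setting $f_i(x)=x_i-g_i(x)$, the hypothesis $g([0,1]^n)\subseteq[0,1]^n$ forces $f_i\le 0$ on the face $\{x_i=0\}$ and $f_i\ge 0$ on the face $\{x_i=1\}$. A common zero $z$ of $f_1,\dots,f_n$ is exactly a point with $g(z)=z$, so it suffices to produce such a $z$.

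To locate the common zero I would discretize. For a large integer $m$ I consider the grid of points whose coordinates lie in $\{0,1/m,\dots,1\}$, together with the standard triangulation of each small cube into simplices. To each grid vertex $v$ I attach a label $\ell(v)\in\{1,\dots,n\}$ recording an index $i$ at which the sign of $f_i(v)$ follows a fixed rule (for instance the least $i$ with $f_i(v)>0$, with a tie-breaking convention on the faces). The opposite-face sign conditions inherited from $f_i\le 0$ and $f_i\ge 0$ are precisely what make this labeling admissible in the Sperner sense along the boundary.

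The combinatorial core is then a Sperner/parity lemma: an admissibly labeled triangulation of the cube must contain a small simplex on whose $n+1$ vertices all $n$ labels occur. I would establish this by the classical door-counting argument, inducting on dimension through the boundary faces so that a completely labeled simplex cannot be spurious. Finally I would let $m\to\infty$. Each subdivision yields a fully labeled simplex whose vertices lie within diameter $\sqrt{n}/m$ of one another; by compactness of $[0,1]^n$ a subsequence of these shrinking simplices has all vertices converging to a common limit $z$, and the uniform continuity of $g$ together with the sign pattern carried by the labeled vertices forces $f_i(z)=0$ for every $i$, i.e.\ $g(z)=z$.

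The main obstacle I anticipate lies entirely in the discrete lemma and its conventions: one must choose the tie-breaking rule on the faces so that the labeling is genuinely admissible even where some $f_i$ vanishes at a grid point, and then run the parity count so that full labelings are guaranteed to persist under refinement. By contrast the reduction to Poincar\'e--Miranda is immediate, and the passage to the limit is routine once uniform continuity and compactness are invoked.
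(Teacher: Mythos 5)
Your overall architecture --- discretize, attach a Sperner--admissible label to each grid vertex by comparing $g_i(x)$ with $x_i$, extract a fully labeled small simplex by a parity/door-counting induction on dimension, and pass to the limit by compactness --- is exactly the architecture of the paper (the paper's ``$k$-strings'' are the vertex sets of the simplices of the standard triangulation you have in mind, and the reduction to Poincar\'e--Miranda is implicit there). However, there is a concrete flaw in the labeling scheme you propose, and it is not the tie-breaking issue you flag. You use only the $n$ labels $\{1,\dots,n\}$, taking $\ell(v)$ to be the least $i$ with $f_i(v)>0$. First, this leaves $\ell(v)$ undefined at vertices where $f_i(v)\le 0$ for every $i$ --- and such vertices must exist (e.g.\ the origin, where $f_i(0)=-g_i(0)\le 0$). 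Second, and more seriously, even after patching the definition, a simplex on whose $n+1$ vertices ``all $n$ labels occur'' does not localize a common zero: a vertex labeled $i$ certifies, in the limit, $f_i(z)\ge 0$ and $f_j(z)\le 0$ for $j<i$, so from the $n$ labels you obtain $f_i(z)\ge 0$ for all $i$ but $f_j(z)\le 0$ only for $j\le n-1$; nothing forces $f_n(z)\le 0$, and your final step ``the sign pattern forces $f_i(z)=0$ for every $i$'' fails at $i=n$.

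The missing idea is the extra label $0$. The paper sets $\ell(x)=\max\{k:\,(x)_k>0,\ g_k(x)\le (x)_k\}$ with $\max\varnothing=0$, so a fully labeled simplex has one vertex $y_0$ with $(y_0)_j\le g_j(y_0)$ for \emph{all} $j$ and, for each $k$, a vertex $y_k$ with $g_k(y_k)\le (y_k)_k$; in the limit these give $f_k(z)\le 0$ and $f_k(z)\ge 0$ simultaneously for every $k$. Using $n+1$ labels on $n+1$ vertices is also what makes the door-counting induction you invoke match the statement being proved: the parity argument counts $(k-1)$-fully-labeled faces as doors, and it needs the label set $N(0,n)$ together with the boundary conditions $\ell(x)\ne k$ when $(x)_k=0$ and $\ell(x)\ge k$ when $(x)_k=1$ --- the latter is exactly where the non-strict inequality $g_k(x)\le (x)_k$, rather than your strict $f_k(x)>0$, earns its keep. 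With these two adjustments your plan becomes the paper's proof.
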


Theorem \ref{THM:Brouwer} is used in numerous fields of mathematics.
So we can consider
 Theorem \ref{THM:Brouwer} is one of the most useful theorems in mathematics.
There are many proofs of Theorem \ref{THM:Brouwer}.
For example,
 the proof based on the Sperner lemma \cite{Sperner}
 is very excellent in a geometric sense.
See also Stuckless \cite{REF:StucklessTara2003} and references therein.

On the other hand,
 in \cite[page 4]{REF:StucklessTara2003},
 it is said that
``It has been estimated that
 while 95\% of mathematicians can state Brouwer's theorem,
 less than 10\% know how to prove it.''
This implies that
 Theorem \ref{THM:Brouwer} is very useful, however,
 the proofs we have obtained are not easy.

In this paper, motivated by this fact,
 we give a proof of Theorem \ref{THM:Brouwer}.
Our proof is so easy that
 most of the mathematicians can verify it.
In our proof, we only use the Bolzano-Weierstrass theorem and
 the fact that an odd number plus an even number is odd.
We do not need any geometric intuition.
Therefore the authors believe that
 even undergraduate students are able to understand our proof.


\section{Preliminaries}
\label{SC:preliminaries}

Throughout this paper,
 we denote by $\NaturalNumber$ the set of all positive integers
 and by $\RealNumber$ the set of all real numbers.
We define $N(i,j)$ by
 $$ N(i,j)
 = \{ k : \; k \in \NaturalNumber \cup \{ 0 \}, \; i \leq k \leq j \} . $$
For $x \in \RealNumber^n$, $(x)_i$ denotes the $i$-th coordinate of $x$.
For an arbitrary set $B$,
 we also denote by $\# B$ the cardinal number of $B$.

Let $L$ be an arbitrary set, $n \in \NaturalNumber$ and $k \in N(0,n)$.
Let $\ell$ be a mapping from $L$ into $N(0,n)$.
We call such a mapping a {\it labeling}.
Then a subset $B$ of $L$ is called {\it $k$-fully labeled} if
 $\# B = k+1$ and $\ell(B) = N(0,k)$.
The following lemma is very fundamental, however, it plays an important role.

\begin{lem}
\label{LEM:fully_labeled}
Let $L$ be an arbitrary set, $n \in \NaturalNumber$ and $k \in N(1,n)$.
Let $\ell$ be a labeling from $L$ into $N(0,n)$.
Let $B$ be a subset of $L$ with $\# B = k+1$ and $\ell(B)\subset N(0,k)$.
Then the following hold{\rm :}
\begin{enumerate}
\renewcommand{\labelenumi}{(\roman{enumi})}
\item
$B$ includes at most two subsets which are $(k-1)$-fully labeled.
\item
$B$ includes exactly one subset which is $(k-1)$-fully labeled
 if and only if
 $B$ is $k$-fully labeled.
\end{enumerate}
\end{lem}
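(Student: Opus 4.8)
The plan is to reduce everything to the count of elements of $B$ whose deletion leaves a $(k-1)$-fully labeled set. First I would note that a $(k-1)$-fully labeled subset $C$ has $\#C = k$, while $\#B = k+1$; hence every such $C$ has the form $C = B \setminus \{b\}$ for a uniquely determined $b \in B$. Thus counting the $(k-1)$-fully labeled subsets of $B$ is the same as counting the elements $b \in B$ for which $\ell(B \setminus \{b\}) = N(0,k-1)$.

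Next I would make the key observation that, since $\#(B\setminus\{b\}) = k = \# N(0,k-1)$ and a surjection between finite sets of equal size is a bijection, the condition $\ell(B\setminus\{b\}) = N(0,k-1)$ forces $\ell$ to restrict to a bijection of $B\setminus\{b\}$ onto $\{0,1,\dots,k-1\}$; in particular the $k$ elements of $B\setminus\{b\}$ carry the labels $0,1,\dots,k-1$ each exactly once. Reading this back on all of $B$: if even one $(k-1)$-fully labeled subset exists, then $B$ must consist of exactly one element carrying each label $j$ with $0 \le j \le k-1$, together with one further element $b_\ast$ whose label $m := \ell(b_\ast)$ lies in $N(0,k)$. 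When $B$ admits no such decomposition there are zero $(k-1)$-fully labeled subsets and both (i) and (ii) hold trivially, so I may assume $B$ has this shape.

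Then I would split into two cases according to the value of $m$. If $m = k$, the labels of $B$ are $0,1,\dots,k$ each once, so $\ell(B) = N(0,k)$ and $B$ is $k$-fully labeled; deleting $b$ leaves $N(0,k-1)$ exactly when $\ell(b) = k$, and since the label $k$ occurs only on $b_\ast$, there is exactly one admissible $b$, hence exactly one $(k-1)$-fully labeled subset. If instead $m \le k-1$, the label $m$ occurs on two elements of $B$ while the label $k$ is absent from $B$ altogether; deleting $b$ leaves $N(0,k-1)$ precisely when $b$ is one of the two elements labeled $m$ (deleting any element with a label $j \ne m$ in $\{0,\dots,k-1\}$ would erase $j$ entirely), giving exactly two $(k-1)$-fully labeled subsets, and here $B$ is not $k$-fully labeled because $k \notin \ell(B)$. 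Combining the three possibilities, zero, one, two, establishes (i) at once, and shows that the count equals one exactly in the case $m = k$, which is exactly the case that $B$ is $k$-fully labeled; this gives (ii).

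I expect the only real obstacle to be bookkeeping discipline: one must keep the cardinality requirement $\#C = k$ and the image requirement $\ell(C) = N(0,k-1)$ genuinely separate, so that repeated labels are tracked correctly. The surjection-is-bijection step is what converts the set-valued condition $\ell(C) = N(0,k-1)$ into the precise statement that each of the $k$ labels $0,\dots,k-1$ occurs once, after which the whole argument is a short case check; there is no genuine difficulty beyond handling the possibility of a single repeated label.
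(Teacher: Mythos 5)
Your proposal is correct and takes essentially the same route as the paper: both reduce to the observation that a $(k-1)$-fully labeled subset $C$ carries each label $0,\dots,k-1$ exactly once, write $B=C\sqcup\{b\}$, and split on whether the remaining element's label equals $k$ (one such subset, $B$ $k$-fully labeled) or is smaller (exactly two, coming from the two elements sharing that label). Your version just makes the surjection-is-bijection bookkeeping more explicit than the paper does.
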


\begin{proof}
Suppose there exists a subset $C$ of $B$ which is $(k-1)$-fully labeled.
Since $\# C = k$, we can suppose $B = C \sqcup \{ b \}$,
 where $\sqcup$ represents `disjoint union'.
In the case where $\ell(b) < k$,
 only two subsets
 $C$ and $\big( C \setminus \ell^{-1}( \ell(b) ) \big) \cup \{ b \}$
 of $B$ are $(k-1)$-fully labeled.
In the other case, where $\ell(b) = k$,
 $B$ is $k$-fully labeled and
 $C$ is the only subset of $B$ which is $(k-1)$-fully labeled.
We have shown (i).
{}From the above observation, (ii) is obvious.
\end{proof}


\section{A Labeling Theorem}
\label{SC:labeling-theorem}

In this section, we fix $n, m \in \NaturalNumber$.
We set $n$ points $e_1, \cdots, e_n \in \RealNumber^n$ by
$$ e_1=(1/m,0,\cdots ,0), \quad
 e_2=(0,1/m,0,\cdots ,0), \quad
 \cdots ,\quad
 e_n=(0,\cdots ,0,1/m). $$
Define $n+1$ subsets $L_0, \cdots, L_n$ of $[0, 1]^n$ by
 $L_0 = \{ 0 \}$ and
\begin{equation}
\label{EQU:L}
 L_k
 = \big\{ \textstyle\sum_{i=1}^k \alpha_i e_i : \;
 \alpha _i \in N(0,m) \big\}
\end{equation}
 for $k \in N(1,n)$.
A labeling $\ell$ from $L_n$ into $N(0,n)$ is called {\it Brouwer} if
 the following two conditions are satisfied:
\begin{enumerate}
\renewcommand{\labelenumi}{(B\arabic{enumi})}
\item
If $(x)_k = 0$ for some $k \in N(1,n)$, then $\ell(x) \neq k$.
\item
If $(x)_k = 1$ for some $k \in N(1,n)$, then $\ell(x) \geq k$.
\end{enumerate}
It is obvious that $\ell(x) \leq k$ for $k \in N(0,n)$ and $x \in L_k$.
A subset $B$ of $L_k$ is said to be a {\it $k$-string} if
 there exist $x_0, \cdots, x_k \in L_k$ and a bijection $\sigma$ on $N(1,k)$
 such that
 $B = \{ x_0, \cdots, x_k\}$ and
 $ x_j = x_0 + \sum_{i=1}^j e_{\sigma(i)} $ for $j \in N(1,k)$.
The following are obvious:
\begin{align}
&\textstyle
 x_k = x_0 + \sum_{i=1}^k e_i ,
\label{EQU:string-1} \\*
&\textstyle
 \sum_{i=1}^n (x_j)_i = \sum_{i=1}^n (x_0)_i + j/m
 \quad \text{for } j \in N(1,k) .
\label{EQU:string-2}
\end{align}
We sometimes write $B = \langle x_0, \cdots, x_k \rangle$.
We note that $B$ is a $0$-string if and only if $B = \{ 0 \}$.

\begin{thm}
\label{THM:label}
Let $\ell$ be a Brouwer labeling from $L_n$ into $N(0,n)$.
Then there exists an $n$-string which is $n$-fully labeled.
\end{thm}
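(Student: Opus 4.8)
The plan is to prove, by induction on $k\in N(0,n)$, that the number $s_k$ of $k$-strings which are $k$-fully labeled is odd; the theorem is then the case $k=n$, since oddness forces $s_n\ge 1$. For the base case $k=0$ the only $0$-string is $\{0\}$, and because every coordinate of $0$ vanishes, condition (B1) forces $\ell(0)\ne j$ for all $j\in N(1,n)$, hence $\ell(0)=0$; thus $\{0\}$ is $0$-fully labeled and $s_0=1$.

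For the inductive step, fix $k\in N(1,n)$ and double-count the set of pairs $(B,C)$ in which $B$ is a $k$-string and $C\subset B$ is $(k-1)$-fully labeled. Summing over $B$ first: every $k$-string satisfies $\#B=k+1$ and $\ell(B)\subset N(0,k)$, since $\ell(x)\le k$ for $x\in L_k$, so Lemma~\ref{LEM:fully_labeled} applies and says that $B$ contains $0$, $1$, or $2$ such subsets, with exactly one precisely when $B$ is $k$-fully labeled. Hence the total number of pairs is congruent to $s_k$ modulo $2$.

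Summing over $C$ instead, I would fix a $(k-1)$-fully labeled $k$-element set $C$ that is a face of at least one $k$-string and count how many $k$-strings contain it. Using the coordinate-sum identity \eqref{EQU:string-2} to linearly order the points of any string, one sees that the missing vertex can only be reattached at the bottom or the top of this order, an insertion in the middle being impossible because it would split a single step $e_i$ into two positive steps. I expect this extension count to be the main obstacle, and I would dispose of it by a case split on the shape of $C$. If the steps of $C$ omit one direction, then all points of $C$ share a single value $v$ in that coordinate, and a bottom- and a top-extension both stay inside $L_k$ iff $0<v<1$ (giving two strings), whereas $v=0$ or $v=1$ leaves exactly one; if instead $C$ already uses all $k$ directions but skips a middle vertex, the resulting double step $e_a+e_b$ can be resplit in exactly the two orders, again giving two strings.

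It then remains to identify which faces $C$ lie in an odd number of strings, and here conditions (B1) and (B2) do the work. The value $v=1$ is impossible, for (B2) would force every label of $C$ to be $\ge 1$, contradicting $0\in\ell(C)$; and $v=0$ forces the omitted direction to equal $k$, since (B1) excludes that label while $\ell(C)=N(0,k-1)$. Consequently the faces contained in a single $k$-string are exactly the $(k-1)$-fully labeled sets lying in $\{x:(x)_k=0\}=L_{k-1}$, which are precisely the $(k-1)$-fully labeled $(k-1)$-strings, while every other face lies in exactly two. Counting modulo $2$ therefore yields $s_k\equiv s_{k-1}$, so $s_k$ is odd by the inductive hypothesis, and the case $k=n$ produces an $n$-fully labeled $n$-string.
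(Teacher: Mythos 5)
Your proposal is correct and takes essentially the same route as the paper: an induction on $k$ with a parity-preserving double count of the pairs $(B,C)$, in which the boundary cases are ruled out or forced by (B1) and (B2) exactly as in the paper's Lemmas \ref{LEM:string-k-1}--\ref{LEM:induction-k}. Your case split on the shape of $C$ (an omitted direction with common value $v$ versus a double step $e_a+e_b$) is just a reorganization of the paper's case split on the position $h$ of the missing vertex ($h=0$, $0<h<k$, $h=k$), and the conclusion $s_k\equiv s_{k-1}\pmod 2$ is the paper's identity $\#S_1+2\#S_2=\#T_1+2\#T_2$.
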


We note that Theorem \ref{THM:label} is connected with
 the Sperner lemma \cite{Sperner}.
Before proving Theorem \ref{THM:label}, we need some preliminaries.

\begin{lem}
\label{LEM:induction-0}
There exists exactly one $0$-string which is $0$-fully labeled.
\end{lem}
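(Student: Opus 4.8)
The plan is to observe that the statement is, in essence, a reformulation of a single fact about any Brouwer labeling, namely that $\ell(0) = 0$. The excerpt already records that $B$ is a $0$-string if and only if $B = \{0\}$, so there is exactly one $0$-string to consider in the first place. Consequently the assertion ``there exists exactly one $0$-string which is $0$-fully labeled'' collapses to the claim that this unique $0$-string $\{0\}$ is itself $0$-fully labeled; uniqueness is then automatic.

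The next step is to unwind the definition of $k$-fully labeled in the degenerate case $k = 0$. By definition, a set $B$ is $0$-fully labeled precisely when $\# B = 0+1 = 1$ and $\ell(B) = N(0,0) = \{0\}$. For $B = \{0\}$ the cardinality requirement holds trivially, so the whole matter reduces to verifying the single equality $\ell(0) = 0$.

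To establish $\ell(0) = 0$ I would appeal to condition (B1) of the Brouwer labeling. Every coordinate of the origin vanishes, that is $(0)_k = 0$ for each $k \in N(1,n)$, and hence (B1) forces $\ell(0) \neq k$ for every such $k$. Since $\ell$ takes values in $N(0,n) = \{0,1,\ldots,n\}$, the only value not excluded is $0$, whence $\ell(0) = 0$, as required.

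I do not expect any genuine obstacle here: this lemma is simply the base case of the induction that will ultimately drive the proof of Theorem \ref{THM:label}, and the argument is purely a matter of chasing definitions. The one point that calls for a little care is to make sure one invokes (B1), the condition governing vanishing coordinates, rather than (B2), and to apply it at the single point $0$ where all coordinates simultaneously vanish.
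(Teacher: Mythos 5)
Your proposal is correct and matches the paper's argument: the paper's proof is the one-liner ``Since $\ell(0)=0$, $\langle 0\rangle$ is a $0$-string which is $0$-fully labeled,'' and you have simply filled in the justification that $\ell(0)=0$ follows from (B1) at the origin (all coordinates vanish, so every label in $N(1,n)$ is excluded), together with the observation that $\{0\}$ is the unique $0$-string. No difference in substance, only in the level of detail.
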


\begin{proof}
Since $\ell(0) = 0$,
 $\langle 0 \rangle$ is a $0$-string which is $0$-fully labeled.
\end{proof}

\begin{lem}
\label{LEM:string-k-1}
Let $C$ be a $(k-1)$-string for some $k \in N(1,n)$.
Then there exists exactly one $k$-string which includes $C$.
\end{lem}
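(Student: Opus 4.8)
The plan is to prove that the unique $k$-string containing $C$ is the one obtained by appending the single missing direction $e_k$ at the top of $C$. Write $C = \langle y_0, \ldots, y_{k-1} \rangle$, so that $y_j = y_0 + \sum_{i=1}^{j} e_{\tau(i)}$ for a bijection $\tau$ on $N(1,k-1)$. The decisive observation is that, since $C \subseteq L_{k-1}$, every point of $C$ has $k$-th coordinate $0$, while the directions occurring in $C$ are exactly $e_1, \ldots, e_{k-1}$. A $k$-string, on the other hand, uses each of $e_1, \ldots, e_k$ exactly once, so the only way to enlarge $C$ into a $k$-string is to incorporate the direction $e_k$.

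For existence I would extend $\tau$ to a bijection $\sigma$ on $N(1,k)$ by $\sigma(i) = \tau(i)$ on $N(1,k-1)$ and $\sigma(k) = k$, and set $x_j = y_j$ for $j \in N(0,k-1)$ together with $x_k = y_{k-1} + e_k$. A direct check gives $x_j = x_0 + \sum_{i=1}^{j} e_{\sigma(i)}$ for every $j \in N(1,k)$, and $x_k \in L_k$ because its $k$-th coordinate equals $1/m \le 1$ while its remaining coordinates are inherited from $y_{k-1} \in L_{k-1} \subseteq L_k$. Thus $B = \langle x_0, \ldots, x_k \rangle$ is a $k$-string with $C \subseteq B$.

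For uniqueness, which I expect to be the only point requiring care, I would take an arbitrary $k$-string $B = \langle x_0, \ldots, x_k \rangle$ including $C$ and examine its $k$-th coordinate. If $p \in N(1,k)$ is the index with $\sigma(p) = k$, then $(x_j)_k = (x_0)_k$ for $j < p$ and $(x_j)_k = (x_0)_k + 1/m$ for $j \ge p$. Since all $k$ points of $C$ have $k$-th coordinate $0$, this forces $(x_0)_k = 0$, and then the $p$ points with vanishing $k$-th coordinate can contain all of $C$ only when $p = k$; that is, the step in direction $e_k$ must come last. Hence $C = \{x_0, \ldots, x_{k-1}\}$ and the extra point is $x_k = x_{k-1} + e_k$. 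Finally, \eqref{EQU:string-2} shows that the coordinate sums of $x_0, \ldots, x_{k-1}$ strictly increase, so $x_{k-1}$ is the unique element of $C$ of largest coordinate sum; it is therefore determined by $C$, and with it both $x_k$ and the whole set $B$. The heart of the argument is exactly this coordinate dichotomy, which rules out inserting $e_k$ anywhere except at the end and requires no geometric input.
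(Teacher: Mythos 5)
Your proof is correct and takes essentially the same route as the paper's: the paper's one-line argument cites \eqref{EQU:string-1} together with the fact that every point of $C$ has vanishing $k$-th coordinate, which is precisely the coordinate dichotomy you spell out to force the $e_k$ step to come last. You have simply made explicit the uniqueness details that the paper leaves to the reader.
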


\begin{proof}
Suppose $C = \langle x_0, \cdots, x_{k-1} \rangle$.
It follows from \eqref{EQU:string-1} and $(x_i)_k = 0$ for $i \in N(0,k-1)$
 that only
 $$ B = \langle x_0, \cdots, x_{k-1}, x_{k-1} + e_k \rangle $$
 is a $k$-string which includes $C$.
\end{proof}

\begin{lem}
\label{LEM:string-k}
Let $B$ be a $k$-string for some $k \in N(1,n)$ and
 let $C$ be a subset of $B$ which is $(k-1)$-fully labeled.
Then the following hold{\rm :}
\begin{enumerate}
\renewcommand{\labelenumi}{(\roman{enumi})}
\item
If $C \subset L_{k-1}$,
 then $C$ is a $(k-1)$-string and
 there exists exactly one $k$-string which includes $C$.
\item
If $C \not\subset L_{k-1}$,
 then there exist exactly two $k$-strings which include $C$.
\end{enumerate}
\end{lem}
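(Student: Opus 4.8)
The plan is to fix the representation $B=\langle x_0,\dots,x_k\rangle$ with $x_j=x_0+\sum_{i=1}^{j}e_{\sigma(i)}$ and to use that $C$, being $(k-1)$-fully labeled, has $\#C=k$; since $\#B=k+1$, we have $C=B\setminus\{x_r\}$ for a unique $r\in N(0,k)$. The key bookkeeping device is \eqref{EQU:string-2}: along any $k$-string the coordinate sums run through $k+1$ consecutive multiples of $1/m$. Thus for a $k$-string $B'$ to contain $C$, the sums of $B'$ must form a block of $k+1$ consecutive multiples of $1/m$ containing the $k$ sums of $C$; this limits $B'$ to at most two possibilities, after which matching steps against the unused labels identifies them explicitly.

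First I would dispose of (i). Since the $k$-th coordinate is constant along $B$ except for a single jump of $1/m$ at the step $t$ with $\sigma(t)=k$, the condition $C\subset L_{k-1}$ (all points having vanishing $k$-th coordinate) forces $(x_0)_k=0$, $t=k$ (so $\sigma(k)=k$) and $r=k$. Then $C=\langle x_0,\dots,x_{k-1}\rangle$ is a $(k-1)$-string, and Lemma \ref{LEM:string-k-1} gives exactly one $k$-string through it.

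For (ii) I would split on the position of the removed point. If $0<r<k$, the sums of $C$ already span a block of length $k+1$ with one interior gap, so $B'$ must refill the sum $s+r/m$ (where $s=\sum_i(x_0)_i$) by a point between $x_{r-1}$ and $x_{r+1}$; the two steps into and out of it must carry the two labels $\{\sigma(r),\sigma(r+1)\}$ not used elsewhere, giving exactly the two fills $x_r$ (recovering $B$) and $x_{r-1}+e_{\sigma(r+1)}$, the latter lying in $L_k$ since its $\sigma(r+1)$-th coordinate equals $(x_{r+1})_{\sigma(r+1)}\le 1$. If instead $r\in\{0,k\}$, the sums of $C$ form a length-$k$ block, so $B'$ adjoins one point either just below or just above; matching the single unused label shows the inner side only recovers $B$, while the outer side yields the single new candidate $x_k+e_{\sigma(1)}$ (for $r=0$) or $x_0-e_{\sigma(k)}$ (for $r=k$).

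The main obstacle, and the sole use of the Brouwer hypothesis, is to verify that this outer candidate really lies in $L_k$. For $r=0$ failure would mean $(x_0)_{\sigma(1)}=(m-1)/m$, so every point of $C$ has $\sigma(1)$-th coordinate $1$; then (B2) forces every label in $\ell(C)$ to be $\ge\sigma(1)\ge 1$, contradicting $0\in\ell(C)=N(0,k-1)$. For $r=k$ failure would mean $(x_0)_{\sigma(k)}=0$, so every point of $C$ has $\sigma(k)$-th coordinate $0$; then (B1) forbids the label $\sigma(k)$ on $C$, forcing $\sigma(k)=k$ and hence $(x_0)_k=0$, which together with $r=k$ is case (i), excluded. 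Hence in case (ii) the outer candidate is always admissible, and being distinct from $B$ (different coordinate sums, or a single changed point) it supplies exactly the second $k$-string.
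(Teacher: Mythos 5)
Your proof is correct and follows essentially the same route as the paper's: the same case split on whether $C \subset L_{k-1}$ and on the position of the removed vertex, the same explicit second $k$-strings ($x_k + e_{\sigma(1)}$, $x_{h-1}+e_{\sigma(h+1)}$, $x_0 - e_{\sigma(k)}$), and the same use of (B1) and (B2) to rule out the boundary obstructions. The only difference is that you spell out the coordinate-sum argument showing there are at most two candidates, which the paper compresses into a bare citation of \eqref{EQU:string-1} and \eqref{EQU:string-2}.
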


\begin{proof}
Suppose $B = \langle x_0, \cdots, x_k \rangle$,
 $ x_j = x_0 + \sum_{i=1}^j e_{\sigma(i)} $ and
 $B = C \sqcup \{ x_h \}$.
In the case of (i),
 it is obvious that $h = k$ and $\sigma(k) = k$.
So $C$ is a $(k-1)$-string.
By Lemma \ref{LEM:string-k-1},
 only $B$ is a $k$-string which includes $C$.
In the case of (ii), we consider three cases:
\begin{itemize}
\item $h = 0$
\item $0 < h < k$
\item $h = k$
\end{itemize}
In the first case, arguing by contradiction,
 we assume $(x_k)_{\sigma(1)} = 1$.
Then $(x_1)_{\sigma(1)} = \cdots = (x_k)_{\sigma(1)} = 1$,
 which imply $\ell(x_1) \geq \sigma(1) > 0, \cdots,
 \ell(x_k) \geq \sigma(1) > 0$
 by (B2).
Thus $C$ is not $(k-1)$-fully labeled.
This is a contradiction.
Thus $(x_k)_{\sigma(1)} < 1$.
Therefore
 $$ B' = \langle x_1, \cdots, x_k, x_k + e_{\sigma(1)} \rangle $$
 is another $k$-string and includes $C$.
In the second case,
 $$ B' = \langle x_0, \cdots, x_{h-1},
 x_{h-1}+e_{\sigma(h+1)}, x_{h+1}, \cdots, x_k \rangle $$
 is another $k$-string and includes $C$.
In the third case,
 arguing by contradiction,
 we assume $(x_0)_{\sigma(k)} = 0$.
Then $(x_0)_{\sigma(k)} = \cdots = (x_{k-1})_{\sigma(k)} = 0$,
 which imply $\ell(x_0) \neq \sigma(k), \cdots, \ell(x_{k-1}) \neq \sigma(k)$
 by (B1).
Since $\sigma(k) = k$ implies $C \subset L_{k-1}$, we have $\sigma(k) < k$.
So $C$ is not $(k-1)$-fully labeled.
This is a contradiction.
Thus $(x_0)_{\sigma(k)} > 0$.
Therefore
 $$ B' = \langle x_0 - e_{\sigma(k)}, x_0, \cdots, x_{k-1} \rangle $$
 is another $k$-string and includes $C$.
By \eqref{EQU:string-1} and \eqref{EQU:string-2},
 it is impossible that three $k$-strings include $C$.
\end{proof}

By Lemmas \ref{LEM:string-k-1} and \ref{LEM:string-k},
 we obtain the following.

\begin{lem}
\label{LEM:fully_labeled-2}
Let $C$ be a subset of $L_k$ which is $(k-1)$-fully labeled
 for some $k \in N(1,n)$.
Then the following hold{\rm :}
\begin{enumerate}
\renewcommand{\labelenumi}{(\roman{enumi})}
\item
$C$ is included by at most two $k$-strings.
\item
$C$ is included by exactly one $k$-string
 if and only if
 $C$ is a $(k-1)$-string.
\end{enumerate}
\end{lem}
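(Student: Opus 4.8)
The plan is to reduce everything to Lemmas \ref{LEM:string-k-1} and \ref{LEM:string-k} by a short case analysis on the number of $k$-strings that include $C$; write $t$ for this number. First I would dispose of the trivial case $t = 0$: if no $k$-string includes $C$, then the bound in (i) holds vacuously, and both sides of the biconditional in (ii) fail consistently. Indeed $t = 1$ is false, and $C$ cannot be a $(k-1)$-string, since a $(k-1)$-string is included in exactly one $k$-string by Lemma \ref{LEM:string-k-1}, which would force $t \ge 1$.

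Next, assume $t \ge 1$ and fix a $k$-string $B$ with $C \subset B$. Since $C$ is $(k-1)$-fully labeled and $C \subset B$, Lemma \ref{LEM:string-k} applies with this $B$, and I would split on the dichotomy of that lemma. If $C \subset L_{k-1}$, part (i) gives that $C$ is a $(k-1)$-string and that exactly one $k$-string includes $C$, so $t = 1$; if $C \not\subset L_{k-1}$, part (ii) gives $t = 2$. In either subcase $t \le 2$, which together with the case $t = 0$ establishes (i).

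For (ii) I would read off the biconditional from the same dichotomy. For the ``if'' direction, suppose $C$ is a $(k-1)$-string; then $C \subset L_{k-1}$ by the very definition of a string, and Lemma \ref{LEM:string-k-1} yields exactly one $k$-string including $C$, i.e.\ $t = 1$. For the ``only if'' direction, suppose $t = 1$; then $t \ge 1$, so the analysis of the previous paragraph applies, and since $C \not\subset L_{k-1}$ would force $t = 2$, we must have $C \subset L_{k-1}$, whence Lemma \ref{LEM:string-k}(i) gives that $C$ is a $(k-1)$-string.

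The only genuine subtlety here is bookkeeping rather than geometry: one must make sure the case $t = 0$ is not silently excluded, and that the forward implication of (ii) explicitly rules out the two-string alternative before invoking Lemma \ref{LEM:string-k}(i). There is no computation to grind through, since all the combinatorial content has already been isolated in the two cited lemmas, so I expect the write-up to be essentially a transcription of this case split.
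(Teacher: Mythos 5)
Your proposal is correct and is exactly the argument the paper intends: the paper gives no written proof, merely stating that the lemma follows from Lemmas \ref{LEM:string-k-1} and \ref{LEM:string-k}, and your case split on the number $t$ of containing $k$-strings (including the careful treatment of $t=0$ and the dichotomy $C \subset L_{k-1}$ versus $C \not\subset L_{k-1}$) is precisely the deduction being left to the reader. No discrepancies to report.
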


\begin{lem}
\label{LEM:induction-k}
Let $k \in N(1,n)$.
Assume
 there exist exactly odd $(k-1)$-strings which are $(k-1)$-fully labeled.
Then there exist exactly odd $k$-strings which are $k$-fully labeled.
\end{lem}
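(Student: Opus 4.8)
The plan is to prove the assertion by a parity double-counting argument on the incidences between $k$-strings and their $(k-1)$-fully labeled subsets; this is precisely the ``an odd number plus an even number is odd'' reasoning promised in the introduction. First I would introduce the finite set of pairs
$$ P = \{ (C, B) : B \text{ is a $k$-string}, \ C \subset B, \ C \text{ is $(k-1)$-fully labeled} \} , $$
which is finite because $L_n$, and hence the collection of $k$-strings, is finite. The entire proof then amounts to computing $\# P \pmod 2$ in two different ways.

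Next I would count $P$ according to its second coordinate. Every $k$-string $B$ is a subset of $L_k$ with $\# B = k+1$, and since $\ell(x) \leq k$ for all $x \in L_k$ we have $\ell(B) \subset N(0,k)$, so Lemma \ref{LEM:fully_labeled} applies to $B$. By part (i) the number of $(k-1)$-fully labeled subsets contained in $B$ is $0$, $1$, or $2$, and by part (ii) it equals $1$ exactly when $B$ is $k$-fully labeled. Hence the $k$-strings contributing an odd amount to $\# P$ are precisely the $k$-fully labeled ones, so $\# P$ is congruent modulo $2$ to the number $A$ of $k$-strings which are $k$-fully labeled.

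Then I would count $P$ according to its first coordinate, letting $C$ range over all subsets of $L_k$ which are $(k-1)$-fully labeled. Here Lemma \ref{LEM:fully_labeled-2} shows that each such $C$ is contained in $0$, $1$, or $2$ $k$-strings, with exactly one precisely when $C$ is a $(k-1)$-string. Since every $(k-1)$-string lies in $L_{k-1} \subset L_k$, the $(k-1)$-strings which are $(k-1)$-fully labeled are among the sets $C$ being counted, and they are exactly the $C$ contributing an odd amount. Therefore $\# P$ is congruent modulo $2$ to the number $D$ of $(k-1)$-strings which are $(k-1)$-fully labeled. Comparing the two counts yields $A \equiv D \pmod 2$, and since $D$ is odd by hypothesis, $A$ is odd, which is the claim.

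The step I expect to require the most care is the bookkeeping in the second count: I must confirm that the index set ``$(k-1)$-fully labeled subsets of $L_k$'' is simultaneously large enough to contain every $(k-1)$-string that is $(k-1)$-fully labeled and tight enough that, by Lemma \ref{LEM:fully_labeled-2}, no stray $C$ with an even contribution disturbs the parity. Once the two ``exactly one if and only if'' characterizations are aligned so that the odd-contributing terms on each side correspond exactly to the objects being counted, the conclusion follows immediately from the parity comparison.
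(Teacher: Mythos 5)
Your proposal is correct and is essentially the paper's own argument: the paper also double-counts the incidences between $k$-strings and their $(k-1)$-fully labeled subsets, phrasing it as $\# S_1 + 2\# S_2 = \# T_1 + 2\# T_2$ and invoking Lemma \ref{LEM:fully_labeled} (i), (ii) and Lemma \ref{LEM:fully_labeled-2} (i), (ii) exactly as you do. Your explicit check that the odd-contributing terms on each side are precisely the $k$-fully labeled $k$-strings and the $(k-1)$-fully labeled $(k-1)$-strings matches the paper's identification of $S_1$ and $T_1$.
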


\begin{proof}
Define four sets $S_1$, $S_2$, $T_1$ and $T_2$ as follows:
$B \in S_1$ if and only if
 $B$ is a $k$-string which includes exactly one $(k-1)$-fully labeled subset.
$B \in S_2$ if and only if
 $B$ is a $k$-string which includes exactly two $(k-1)$-fully labeled subsets.
$C \in T_1$ if and only if
 $C$ is a $(k-1)$-fully labeled subset included by exactly one $k$-string.
$C \in T_2$ if and only if
 $C$ is a $(k-1)$-fully labeled subset included by exactly two $k$-strings.
By Lemma \ref{LEM:fully_labeled} (ii),
 we note that $B \in S_1$ if and only if
 $B$ is a $k$-string which is $k$-fully labeled.
By Lemma \ref{LEM:fully_labeled-2} (ii),
 we also note that $C \in T_1$ if and only if
 $C$ is a $(k-1)$-string which is $(k-1)$-fully labeled.
With double-counting,
 we count the number of $(k-1)$-fully labeled subsets in $k$-strings.
Then by Lemmas \ref{LEM:fully_labeled} (i) and \ref{LEM:fully_labeled-2} (i),
 we have
 $$ \# S_1 + 2 \#S_2 = \# T_1 + 2 \# T_2 . $$
Since $\# T_1$ is odd, we obtain $\# S_1$ is odd.
\end{proof}

\begin{proof}[Proof of Theorem \ref{THM:label}]
By Lemmas \ref{LEM:induction-0} and \ref{LEM:induction-k},
There exist exactly odd $n$-strings which are $n$-fully labeled.
Since $0$ is not odd,
 there exists at least one $n$-string which is $n$-fully labeled.
\end{proof}


\section{A Proof}
\label{SC:proof}

We give a proof of Theorem \ref{THM:Brouwer}.

\begin{proof}[Proof of Theorem \ref{THM:Brouwer}]
Define $n$ functions $g_1, \cdots, g_n$ from $[0, 1]^n$ into $[0, 1]$ by
 $ g_k(x) = \big( g(x) \big)_k $ for $k \in N(1,n)$.
It is obvious that $g_k$ is continuous.

We fix $m \in \NaturalNumber$ and define $L_n$ by \eqref{EQU:L}.
Define a labeling $\ell$ from $L_n$ into $N(0,n)$ by
 $$ \ell(x) = \max \{ k \in N(1,n) : \; (x)_k > 0, \; g_k(x) \leq (x)_k \} , $$
 where $\max \varnothing = 0$.
{}From the definition of $\ell$, (B1) obviously holds.
If $(x)_k = 1$, then $g_k(x) \leq (x)_k$, which implies $\ell(x) \geq k$.
So (B2) holds and hence $\ell$ is Brouwer.
We note that if $\ell(x) = 0$, then $(x)_j \leq g_j(x)$ for $j \in N(1,n)$.
By Theorem \ref{THM:label},
 there exists an $n$-string $B^{(m)}$ which is $n$-fully labeled.
Let $y_0^{(m)}, \cdots, y_n^{(m)} \in [0, 1]^n$ satisfy
 $B^{(m)} = \{ y_0^{(m)}, \cdots, y_n^{(m)} \}$
 and $\ell(y_k^{(m)}) = k$ for $k \in N(0,n)$.

Since $[0,1]^n$ is compact, by the Bolzano-Weierstrass theorem,
 $\{ y_0^{(m)} \}$ has a convergent subsequence.
Without loss of generality,
 we may assume $\{ y_0^{(m)} \}$ itself converges to some $z \in [0, 1]^n$.
Since the diameter of $B^{(m)}$ is $\sqrt{n \;} / m$,
 $\{ y_k^{(m)} \}$ also converges to $z$ for $k \in N(1,n)$.
Since
 $(y_0^{(m)})_j \leq g_j(y_0^{(m)})$ for $j \in N(1,n)$ and
 $g_k(y_k^{(m)}) \leq (y_k^{(m)})_k$ for $k \in N(1,n)$,
 we have $g_k(z) = (z)_k$ for all $k \in N(1,n)$.
Thus $g(z)=z$.
\end{proof}


\end{document}